\newtheorem{theorem}{Theorem}[section]
\newtheorem{lemma}[theorem]{Lemma}
\theoremstyle{remark}
\newtheorem{definition}[theorem]{Definition}
\begin{document}
\title{A Degree-Theoretic Proof of a Coarse Fixed Point Principle}
\author{Steven Hair}
\date{\today}
\maketitle
\begin{abstract}
We introduce a large scale analogue of the classical fixed-point property for continuous maps, which shall apply to coarse maps. We also develop a coarse version of degree for coarse maps on Euclidean spaces. Then, applying a coarse degree-theoretic argument, we prove that every coarse map from a Euclidean half-space to itself has the coarse fixed-point property.
\end{abstract}
\section{Introduction}
This paper is intended to establish a coarse geometry analogue to the classical Brouwer fixed point theorem, which states that every continuous function from a disk to itself has a fixed point. To this end, we shall first review some of the fundamental principles of coarse geometry -- namely coarse maps, coarse homology, and coarse homotopy -- then discuss various notions of coarse fixed point properties for maps on open cones of spaces. Finally, we shall show that every coarse map from a Euclidean half-space to itself possesses such a coarse fixed point property, using a degree-theoretic argument.
\section{Coarse Maps, Degree, Homotopy}
The following definitions apply to coarse spaces equipped with a \emph{metric coarse structure}; they are, however, generalizable to the general category of coarse spaces (as in the sense of \cite{Roe1}, Section 2.1).
\begin{definition}
Let $X$ and $Y$ be metric spaces. A \emph{coarse map} $f: X \to Y$ is a function such that:
\begin{itemize}
\item If $B \subset Y$ is bounded, $f^{-1}(B) \subset X$ is bounded ($f$ is \emph{proper});
\item For every $R > 0$ there exists $S > 0$ such that, f $x,x' \in X$ with $d_X(x,x') \leq R$, then $d_Y(f(x),f(x')) \leq R$ ($f$ is \emph{bornologous}).
\end{itemize}
\end{definition}

\begin{definition}
If $X$ is a metric space, the group of \emph{coarse $q$-chains} on $X$ with coefficients in $\mathbb{Z}$, denoted $CX_q(X;\mathbb{Z})$, is the set of all locally finite linear combinations of $(q+1)$-tuples
\[
\sigma = \sum_{x=(x_0,\ldots,x_q) \in X^{q+1}} r_x \cdot (x_0,\ldots,x_q), \ r_x \in \mathbb{Z},
\]
such that there exists $R > 0$ such that, for each $(x_0,\ldots,x_q)$ in the sum, $d_X(x_i,x_j) \leq R$.

The coarse chain complex $CX_*(X;\mathbb{Z})$ is given the boundary map $b$ defined by
\[
b\left(\sum_x r_x \cdot (x_0,\ldots,x_q)\right) := \sum_{i=0}^q (-1)^i \sum_x r_x \cdot (x_0,\ldots,\hat{x_i},\ldots,x_q),
\]
and the resulting homology groups, the \emph{coarse homology groups} of $X$ with coefficients in $\mathbb{Z}$, are denoted $HX_*(X;\mathbb{Z})$.
\end{definition}
The key example in this paper is the coarse homology of the Euclidean space $\mathbb{R}^n$:
\[
HX_{q}(\mathbb{R}^n) \cong
\begin{cases}
\mathbb{Z} & \text{if }q=n,\\
0 & \text{otherwise};
\end{cases}
\]
see Chapter 2 in \cite{Roe3}. If $N$ is a neighborhood of the diagonal in $\mathbb{R}^n$ of radius at least 1, $HX_{n}(\mathbb{R}^n)$ is generated by the class of the cycle
\[
\sum_{x_i \in \mathbb{Z}^n \cap N} (x_0, \ldots, x_n).
\]
Every coarse map $f: \mathbb{R}^n \to \mathbb{R}^n$ induces a group homomorphism $f_*: HX_n(\mathbb{R}^n;\mathbb{Z}) \to HX_n(\mathbb{R}^n;\mathbb{Z})$ in the obvious way. As $f_*$ is a homomorphism from $\mathbb{Z}$ to itself, it is of the form $f_*(z) = d \cdot z$ for some $d \in \mathbb{Z}$.
\begin{definition}
The \emph{degree} of the coarse map $f$ is defined to be $d$.
\end{definition}

The following definition is due Higson, Roe, and others, and refined by Luu in \cite{Luu}:
\begin{definition}\label{homotopy}
Let $X$ and $Y$ be metric spaces. A \emph{coarse homotopy} from $X$ to $Y$ is a map $h_t: X \times [0,1] \to Y$ such that:
\begin{itemize}
\item For all bounded $B \subset Y$, the set
\[
\bigcup_{t \in [0,1]} h^{-1}_t(B)
\]
is bounded (the family $\{h_t\}$ is \emph{uniformly proper});
\item For all $R > 0$ there exists $S > 0$ such that, if $x,x' \in X$ with $d_X(x,x') \leq R$, then
\[
d_Y(h_t(x),h_t(x')) \leq S
\]
for all $t \in [0,1]$ (the family $\{h_t\}$ is \emph{uniformly bornologous});
\item There exists $R > 0$ such that for all $x \in X$, $t \in [0,1]$, there is an open neighborhood $I_{x,t} \subset [0,1]$ of $t$ such that $t' \in I_{x,t}$ implies $d(h_t(x), h_{t'}(x)) \leq R$ (the family $\{h_t\}$ is \emph{uniformly pseudocontinuous}).
\end{itemize}
\end{definition}
Two coarse maps which are coarsely homotopic induce the same homomorphism on coarse homology (cf. \cite{HigRoe}).

\section{Coarse Fixed Point Properties}
We now introduce a number of fixed point properties, definable on the open cone of a space:
\begin{definition}
Let $X$ be a compact metrizable space. The \emph{open cone on $X$}, denoted $\mathcal{O}X$, is the space obtained from $[0,1) \times X$ by collapsing $\{0\} \times X$ to a point. The open cone is given a metric coarse structure as follows. We may suppose that $X$ is a compact subset of the unit sphere in a real Hilbert space $E$; let $\phi$ be a homeomorphism of $[0,1)$ onto $[0,\infty)$. Then, the map
\[
\mathcal{O}X \to E, \ \ \ \ (t,x) \mapsto \phi(t)x,
\]
identifies $\mathcal{O}X$ with a subset of $E$ and so defines a metric on $\mathcal{O}X$.
\end{definition}
Note that $\mathbb{R}^n$ is the open cone on the $(n-1)$-sphere $S^{n-1}$, and the half-space $\mathbb{R}^n \times [0,\infty)$ is the open cone on the $n$-disk $D^n$.

\begin{definition}
A coarse map $f: \mathcal{O}X \to \mathcal{O}X$ has the:
\begin{enumerate}
\item \emph{Strong coarse fixed point property} if there exists a point $\zeta \in \mathcal{O}X$ (called the \emph{coarse fixed point} of $f$), a constant $R > 0$, and a sequence of points $x_i \to \infty$ in $\mathcal{O}X$ such that, for all $i$, both $x_i$ and $f_i$ are within distance $R$ of the ray $\mathcal{O}\zeta$;

\item \emph{Coarse fixed point property} if there exists a sequence of points $\zeta_i \in X$, a constant $R > 0$, and points $x_i \to \infty$ in $\mathcal{O}X$ such that, for all $i$, both $x_i$ and $f(x_i)$ are within distance $R$ of the ray $\mathcal{O}\zeta_i$;

\item \emph{Weak coarse fixed point property} if there exists a $\zeta \in X$ such that, for any neighborhood $U$ of $\zeta$, there is a sequence of points $x_i \to \infty$ in $\mathcal{O}X$ such that, for all $i$, $x_i$ and $f(x_i)$ lie in the cone $\mathcal{O}U$.
\end{enumerate}
\end{definition}
It can be shown (cf. \cite{Hair}, Section 3.4.1) that the Strong CFPP is stronger than the CFPP, which is in turn stronger than the Weak CFPP. For the purposes of this paper, we shall concern ourselves with the ``middle" coarse fixed point property.

\section{The Coarse Fixed Point Theorem}
\begin{theorem}
Every coarse map $f: \mathbb{R}^n \times [0,\infty) \to \mathbb{R}^n \times [0,\infty)$ has the coarse fixed point property.
\end{theorem}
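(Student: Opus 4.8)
\section*{Proof proposal}

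The plan is to argue by contradiction, transporting the classical degree-theoretic proof of the Brouwer theorem (``the absence of a fixed point produces a retraction of the disk onto its boundary'') into the coarse category. Write $H = \mathbb{R}^n\times[0,\infty) = \mathcal{O}D^n$ for the half-space and identify its boundary with $\partial H = \mathbb{R}^n\times\{0\} = \mathcal{O}S^{n-1}$, so that $\partial H$ carries the class generating $HX_n(\mathbb{R}^n)\cong\mathbb{Z}$. Besides the degree formalism already set up, the key homological input is that the half-space has vanishing top coarse homology: since $H=\mathbb{R}^n\times[0,\infty)$ is flasque (it is a product with the ray $[0,\infty)$), its coarse homology vanishes; equivalently, $HX_q(\mathcal{O}X)\cong\tilde H_{q-1}(X)$ and $D^n$ is contractible, so $HX_n(H)=0$. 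Thus the inclusion $i\colon\partial H\hookrightarrow H$ induces the zero map $i_*\colon HX_n(\partial H)=\mathbb{Z}\to HX_n(H)=0$, and any coarse retraction $r\colon H\to\partial H$ for which $r\circ i$ has degree $1$ would force the contradiction $1=\deg(r\circ i)=0$.

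So suppose $f$ has no coarse fixed point property. First I would reformulate this hypothesis as a uniform statement at infinity. Taking the candidate rays $\zeta=\hat{x}:=x/\|x\|\in D^n$ (so that $x$ itself lies on $\mathcal{O}\zeta$), the failure of the CFPP forces $\mathrm{dist}\big(f(x),\,\mathcal{O}\hat{x}\big)\to\infty$ as $x\to\infty$: otherwise a sequence $x_i\to\infty$ with $f(x_i)$ within a bounded distance $R$ of the ray through $x_i$ would, on setting $\zeta_i=\hat{x_i}$, exhibit the property. In words, $f(x)$ drifts transversally away from the ray determined by $x$; this is the coarse shadow of ``$x$ and $f(x)$ point in genuinely different directions,'' and it is exactly what lets me run the retraction construction.

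Next I would build the coarse retraction. Working in the disk of directions $D^n$ (with boundary sphere $S^{n-1}=\partial D^n$ corresponding to $\partial H$), I would let $\rho(\xi,\eta)\in S^{n-1}$ be the Brouwer retraction direction --- the point where the ray from the direction $\eta$ of $f(x)$ through the direction $\xi$ of $x$ meets $S^{n-1}$ --- and set $r(x)$ to be the point of the boundary ray $\mathcal{O}\rho(\xi,\eta)$ at radius $\|f(x)\|$ (defining $r$ arbitrarily on a bounded set, which is irrelevant coarsely). Because $\rho$ fixes boundary directions, for $x\in\partial H$ one has $r(x)=\|f(x)\|\,\hat{x}$, a direction-preserving radial rescaling of $\mathbb{R}^n$; such a map is coarsely homotopic to $\mathrm{id}_{\mathbb{R}^n}$ via the straight-line homotopy $h_t(x)=\big((1-t)\|x\|+t\|f(x)\|\big)\hat{x}$, which one checks is uniformly proper (its sublevel sets are controlled by $\min(\|x\|,\|f(x)\|)\to\infty$) and uniformly bornologous. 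Hence $r\circ i$ has degree $1$, completing the contradiction with the homology of the previous paragraph.

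The hard part will be verifying that $r$ is a genuine coarse map --- proper and bornologous --- on all of $H$. Properness is immediate, since $r(x)$ has radius $\|f(x)\|$ and $f$ is proper; but bornologousness is delicate for two compounding reasons. The formula $\rho(\xi,\eta)$ is only Lipschitz away from the diagonal $\xi=\eta$, so I must control how close the directions of $x$ and $f(x)$ may be; and a coarse map can distort radial scale arbitrarily (for instance $\|f(x)\|$ may grow like $\sqrt{\|x\|}$), which is precisely the tension between making $r$ bornologous and making it coarsely the identity on $\partial H$. Landing $r(x)$ at radius $\|f(x)\|$ rather than $\|x\|$ is designed to reconcile these: the linear upper bound $\|f(x)\|\le C\|x\|+C'$ from bornologousness of $f$ tames the $\xi$-variation, while the escape estimate $\mathrm{dist}(f(x),\mathcal{O}\hat x)\to\infty$ from the reformulation must be sharpened into the quantitative angular separation needed to tame the $\eta$-variation. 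Pinning down this last estimate, and confirming that it suffices for uniform bornologousness of $r$, is where the real work lies; the homological bookkeeping is then routine.
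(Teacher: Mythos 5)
Your strategy (no CFPP $\Rightarrow$ a coarse retraction $r\colon H\to\partial H$ $\Rightarrow$ contradiction with $HX_n(H)=0$) is genuinely different from the paper's, which never builds a retraction: the paper folds $f$ to $g(x,t)=f(x,|t|)$ on $\mathbb{R}^{n+1}$, gets $\deg g=0$ from the same vanishing of half-space homology, and shows that any map \emph{without} the CFPP is coarsely homotopic to the antipodal map via the straight-line homotopy $H_t(x)=t\,h(x)-(1-t)x$, hence has degree $(-1)^{n+1}\neq 0$. Your homological bookkeeping, the reformulation of non-CFPP as $D(x):=\mathrm{dist}\bigl(f(x),\mathcal{O}\hat x\bigr)\to\infty$, the properness of $r$, and the degree-one claim for $x\mapsto\|f(x)\|\hat x$ on $\partial H$ are all correct. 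But the step you defer --- bornologousness of $r$ --- is not a postponable verification: for the map you actually define it fails, and the ``quantitative angular separation'' you hope to extract from non-CFPP does not exist. Failure of the CFPP is a rate-free statement: it says only that for each $R$ the set $\{x:D(x)\le R\}$ is bounded, so $D(x)$ may grow like $\log\|x\|$ while $\|f(x)\|$ grows linearly. The angular separation of $\xi=\hat x$ and $\eta=\widehat{f(x)}$ is then $|\xi-\eta|\approx D(x)/\|f(x)\|$, and the exit map $\rho$ is sensitive to perturbations of its arguments at order $1/|\xi-\eta|$. Passing to $x'$ with $d(x,x')\le R$ perturbs $\xi$ by about $R/\|x\|$ (and nothing forbids $f(x')=f(x)$, so $\eta$ need not move), whence $|\rho(\xi',\eta)-\rho(\xi,\eta)|$ can be of size $\frac{R/\|x\|}{D(x)/\|f(x)\|}$; multiplying by your landing radius $\|f(x)\|$ gives $d\bigl(r(x),r(x')\bigr)\approx R\|f(x)\|^2/\bigl(\|x\|\,D(x)\bigr)\approx RC^2\|x\|/D(x)$, which with $D(x)\sim\log\|x\|$ tends to infinity. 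Bornologousness of your $r$ would require $D(x)\gtrsim\|f(x)\|$, a \emph{linear} escape rate, and every configuration above is consistent with all the facts you are allowed to use (properness and bornologousness of $f$, plus $D\to\infty$); so no estimate built from those facts can close the gap.

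This is exactly the difficulty the paper's argument is engineered to avoid, and the contrast explains why: in the straight-line homotopy to the antipodal map, bornologousness is free (linear interpolation of coarse maps), and non-CFPP is used only for uniform properness --- if the segment from $-x$ to $h(x)$ meets a fixed ball, then $h(x)$ lies uniformly close to the ray through $x$, which non-CFPP permits only on a bounded set. That is a purely qualitative use of a purely qualitative hypothesis, whereas your construction demands quantitative information the hypothesis cannot supply. If you want to persist with the retraction route, the landing radius must be tied to the escape rather than to $\|f(x)\|$: landing at radius $D(x)$ makes the dangerous product $D(x)\cdot|\rho-\rho'|$ bounded in the interior regime (and $x\mapsto D(x)\hat x$ on $\partial H$ is still coarsely homotopic to the identity, since $D\to\infty$ and $D$ is coarsely Lipschitz), but the exit map degenerates separately when both $\hat x$ and $\widehat{f(x)}$ approach $\partial D^n$, so even that repair requires a careful case analysis that your proposal does not contain. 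As written, the proof stands or falls on the bornologousness estimate, and it falls.
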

\begin{proof}
The proof of this theorem is analogous to the degree-theoretic proof of the classical Brouwer fixed point theorem. Define a coarse map $g: \mathbb{R}^{n+1} \to \mathbb{R}^{n+1}$ by $g(x_1, \ldots, x_n, t) := f(x_1, \ldots, x_n, \vert t \vert)$. Since $g$ factors through a map to $\mathbb{R}^n \times [0,\infty)$, and since the coarse homology of $\mathbb{R}^n \times [0,\infty)$ vanishes in all dimensions (by an argument analogous to Proposition 10.1 in \cite{HPR}), $g$ has degree zero. We shall now prove, via a series of lemmas, that a degree zero map must have the CFPP.
\begin{lemma}
For $0 \leq i \leq n$, the $i^{\text{th}}$ reflection map from $\mathbb{R}^n$ to itself, which sends each $(x_0, \ldots, x_n)$ to $(x_0, \ldots, x_{i-1}, -x_i, x_{i+1}, \ldots, x_n)$, has degree $-1$.
\end{lemma}
\begin{proof}
It suffices to prove the result for $r_1$, as the general proof is analogous. Take a locally finite triangulation $\Delta$ of $\mathbb{R}^n$ such that the interior of each $n$-simplex is disjoint from $X$. We may express $\mathbb{R}^n$ as a union of spaces $D_1 = \{(x_1,x_2,\ldots,x_n) \vert x_1 \leq 0\}$, $D_2 = \{(x_1,x_2,\ldots,x_n) \vert x_1 \geq 0\}$. Letting
\[
\Delta_1 := \sum_{\sigma \in \Delta \cap D_1} \sigma, \ \Delta_2 := \sum_{\sigma \in \Delta \cap D_2} \sigma,
\]
the chain $z = \Delta_1 - \Delta_2$ is a cycle representing the generator of $HX_n(\mathbb{R}^n)$ (identifying each $n$-chain $[a_0,\ldots,a_n]$ with the point $(a_0,\ldots,a_n)$). Since $r$ induces the chain map sending $\Delta_1$ to $\Delta_2$ and vice versa, this induced map sends $z$ to $-z$. Thus $r$ has degree $-1$.
\end{proof}
\begin{lemma}
The antipodal map from $\mathbb{R}^{n+1}$ to itself, which sends each $x = (x_0, \ldots, x_n)$ to $-x = (-x_0, \ldots, -x_n)$, has degree $(-1)^{n+1}$.
\end{lemma}
\begin{proof}
The antipodal map is the composition of $n+1$ reflection maps.
\end{proof}
\begin{lemma}
Suppose a coarse map $h: \mathbb{R}^{n+1} \to \mathbb{R}^{n+1}$ does not have the CFPP. Then the linear homotopy from each $-x \in \mathbb{R}^{n+1}$ to $h(x)$ defines a coarse homotopy from the antipodal map to $h$, and so $h$ has degree $(-1)^{n+1}$.
\end{lemma}
\begin{proof}
Define a homotopy $H_t: \mathbb{R}^{n+1} \times [0,1] \to \mathbb{R}^{n+1}$ by $H_t(x) := t \cdot h(x) - (1-t)x$. We need to show that $H_t$ satisfies the three requirements for a coarse homotopy. First, since $H_t$ is a continuous homotopy, $\{H_t\}$ is clearly uniformly pseudocontinuous. Also, if $d(x,x') \leq R$, then $d(-x,-x') \leq R$ and there exists $S$ such that $d(h(x),h(x')) \leq S$ (since $h$ is coarse). Therefore, for all $t \in [0,1]$,
\[
d(H_t(x),H_t(x')) \leq t\cdot d(h(x),h(x')) + (1-t)d(x,x') \leq R + S;
\]
thus $\{H_t\}$ is uniformly bornologous. Finally, to show that $\{H_t\}$ is uniformly proper, let $B \subset \mathbb{R}^{n+1}$ be bounded; by replacing $B$ with a larger set if necessary, we may assume that $B$ is a ball of radius $T$ centered at the origin. To prove uniform properness it suffices to show that
\[
V_B := \bigcup_{t \in [0,1]} H^{-1}_t(B)
\]
is bounded. Observe first that $x \in V_B$ if and only if the line segment $L_x$ connecting $-x$ and $f(x)$ intersects $B$. We claim that there exists a $C > 0$ such that, for any $x$ such that $L_x$ intersects $B$, $h(x)$ is within distance $CT$ of the ray originating at the origin and containing $x$. Since we have assumed that $h$ does not have the CFPP, this would prove that the set $\{x \ \vert \ L_x \cap B \neq \emptyset\}$ is bounded.

Suppose that $d(x,0) \geq 2T$. Since $f$ is coarse, there exists $A > 0$, independent of $x$, such that
\[
d(h(x),0) \leq d(h(x),h(0)) + d(h(0),0) \leq A \cdot d(x,0) + d(f(0),0).
\]
Therefore, there exists a constant $K$ such that $d(h(x),0) \leq K \cdot \max\{d(x,0),1\}$; let $C = 2(1+K)$. If there exists a point $p \in B \cap L_x$, then we may construct two similar triangles. The first, smaller, triangle has vertices $-x$, $0$, and $p$. By appropriately choosing a point $q$ on the ray originating at 0 and containing $x$, we construct the second, larger, triangle with vertices $-x$, $h(x)$, and $q$, so that the leg from $h(x)$ to $q$ is parallel to the leg from $p$ to $0$ (cf. Figure \ref{fig1}). Note that, by the triangle inequality, $d(-x,p) \geq d(x,0)/2$, while $d(-x,h(x)) \leq (K+1)d(x,0)$ by definition of $K$. Therefore, by similarity,
\[
d(h(x),q) = \dfrac{d(-x,h(x))}{d(-x,p)} \cdot d(p,0) \leq 2(K+1)T \leq CT.
\]
Therefore, $\{H_t\}$ is indeed uniformly proper.
\begin{figure}
\center{\includegraphics[scale=0.3]{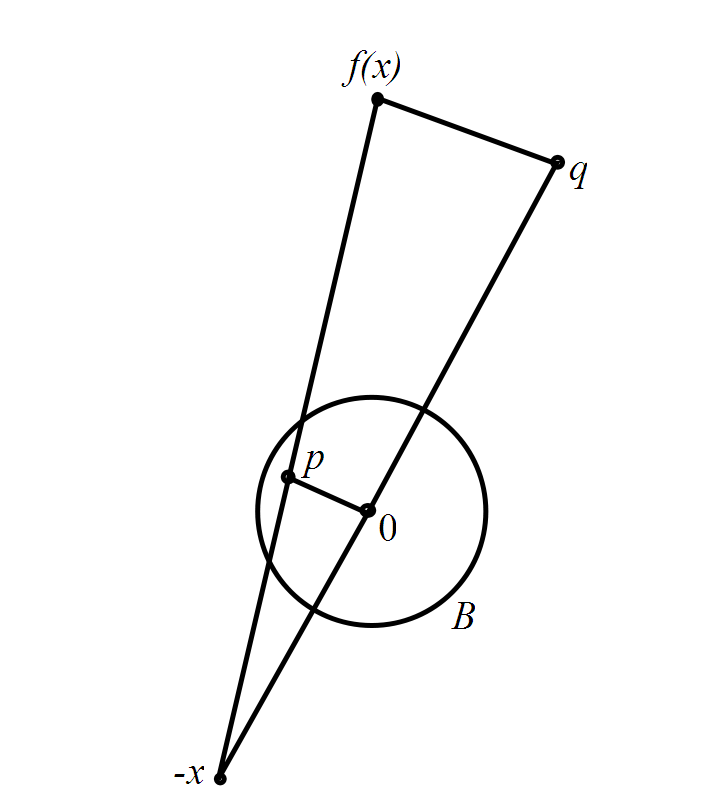}}
\caption{\label{fig1}Construction of similar triangles.}
\end{figure}
\end{proof}
Since $g$ has degree zero, it follows from the previous lemma that it has the CFPP; by the construction of $g$, we conclude that $f$ has the coarse fixed point property as well.
\end{proof}
\newpage
\bibliographystyle{amsplain}
\bibliography{coarsefixedpoint}
\end{document}